\documentclass{amsart}
\usepackage{color}
\usepackage{amsmath,amssymb}

\newtheorem{theorem}{Theorem}[section]
\newtheorem{proposition}[theorem]{Proposition}
\newtheorem{corollary}[theorem]{Corollary}
\newtheorem{lemma}[theorem]{Lemma}

\theoremstyle{definition}

\newtheorem{example}[theorem]{Example}

\theoremstyle{remark}

\numberwithin{equation}{section}

\def\N{\mathbb N}
\newcommand\hdim{\dim_{\mathrm H}}
\usepackage{colortbl}

\begin{document}

\title[set approximated by irrational rotations]{Hausdorff dimension of the set approximated by irrational rotations}

\author{Dong Han Kim}
\address{Department of Mathematics Education, Dongguk University - Seoul, Seoul, 04620 Korea}
\email{kim2010@dongguk.edu}

\author{Micha\l{} Rams}
\address{Institute of Mathematics, Polish Academy of Sciences, 00-956 Warszawa, Poland}
\email{rams@impan.pl}

\author{Baowei Wang}
\address{School of Mathematics and Statistics, Huazhong University of Science and Technology, Wuhan, 430074, China}
\email{bwei\_wang@hust.edu.cn}

\begin{abstract}
Let $\theta$ be an irrational number and $\varphi: {\mathbb N} \to {\mathbb R}^{+}$ be a monotone decreasing function tending to zero.
Let
$$E_\varphi(\theta) =\Big\{y \in \mathbb R: \|n\theta- y\|<\varphi(n), \ {\text{for infinitely many}}\ n\in {\mathbb N} \Big\}, $$
i.e. the set of points which are approximated by the irrational rotation with respect to the error function $\varphi(n)$.
In this article,  we give a complete description of the Hausdorff dimension of $E_\varphi(\theta)$ for any monotone function $\varphi$ and any irrational $\theta$.
\end{abstract}

\keywords {Inhomogeneous Diophantine approximation; Shrinking target problem; Irrational rotation; Hausdorff dimension.}

\subjclass[2010]{Primary 11K55, Secondary 11J71, 28A80}

\maketitle

\section{Introduction}

Let $\theta$ be an irrational number.
The distribution of the sequence $\{n\theta\}_{n\ge 1}$ in $\mathbb R/ \mathbb Z$ is a classical topic in Diophantine approximation.
It has been studied that $\{n\theta\}_{n\ge 1}$ is well distributed.
Weyl's uniform distribution theorem (e.g., \cite{KN}) states that $\{n\theta\}_{n\ge 1}$ is uniformly distributed in $\mathbb R/\mathbb Z$ in the sense that  for any interval $(a,b) \subset [0,1)$,
$$
\lim_{N \to \infty}\frac{1}{N} \# \left\{1\le n \le N:  \langle n \theta \rangle \in (a,b) \right\} = b-a,
$$
where $\langle t \rangle$ for the fractional part of a real number $t$.
Minkowski's theorem (e.g. \cite{Min57}) states that if $y$ is not of the form $m+\ell \theta$ for integers $m, \ell$, then there exist infinitely many integers $n$ such that
$$\|n\theta-y\|< \frac{1}{4|n|}$$
where $\|\cdot\|$ for the distance to the nearest integer.
Moreover, it was shown by Schmeling and Troubetzkoy \cite{ScTr} and by Bugeaud \cite{Bu} independently (see also \cite{BV}) that for $\gamma \ge 1$
\begin{equation}\label{f1}
\hdim \left \{y\in \mathbb R : \|n\theta-y\|<n^{-\gamma} \ \text{ for infinitely many } n\in \N\right\} = \frac1\gamma,
\end{equation}
where $\hdim$ denotes the Hausdorff dimension.
These results are all not dependent on the irrational $\theta$, but if we replace $1/n^{\gamma}$ with a general monotone function $\varphi(n)$, the situation becomes quite different.

Suppose that $\varphi(n)$ is a positive monotone decreasing function. Let
$$E_\varphi(\theta) =\left \{y \in \mathbb R: \|n\theta- y\|<\varphi(n) \ \text{ for infinitely many } n\in {\mathbb N} \right\}. $$
For the Lebesgue measure of $E_\varphi(\theta)$,
Fuchs and Kim \cite{FK} showed that
\begin{equation}\label{FK}
\mathrm{Leb}(E_\varphi(\theta))=1 \ \text{ if and only if } \  \sum_{k=0}^{\infty}\sum_{n=q_k}^{q_{k+1}-1}\min\{\varphi(n), \|q_k\theta\|\}=\infty,
\end{equation}
where $\{q_k\}_{k\ge 1}$ are the denominators of the principal convergents $p_k/q_k$ of $\theta$. (For earlier works, see \cite{BD,Kim,Kurz,Tseng}.)
This shows that the size of $E_\varphi(\theta)$ depends heavily on the Diophantine properties of $\theta$.

For the Hausdorff dimension, Fan and Wu \cite{FanWu} presented an example indicating that $\hdim E_\varphi(\theta)$ also depends heavily on $\theta$. Put
$$
u_\varphi := \limsup_{n \to \infty} \frac{\log n}{-\log \varphi(n)}, \qquad l_\varphi := \liminf_{n \to \infty} \frac{\log n}{- \log \varphi(n)}.
$$
Then, Xu \cite{Xu} showed that
\begin{equation}\label{eq:Xu}
\limsup_{n \to \infty} \frac{\log q_k}{-\log \varphi(q_k)} \le \hdim(E_\varphi(\theta)) \le u_\varphi.
\end{equation}
Moreover, Liao and Rams \cite{LR} proved that
\begin{equation}\label{eq:LR}
\min \left\{ u_\varphi, \max \Big\{ l_\varphi, \frac{1+u_\varphi}{1+w} \Big\} \right\} \le \hdim(E_\varphi(\theta)) \le u_\varphi,
\end{equation}
where
$$ w = \limsup_{k \to \infty} \frac{\log q_{k+1}}{\log q_k}.$$

Our main theorem gives the full description of the Hausdorff dimension of $E_\varphi(\theta)$:

\begin{theorem}\label{thm}
For $0\le s\le 1$, let $q_{k,s}$ be the integer $m$ with $q_k \le m < q_{k+1}$ minimizing
\begin{equation}\label{qks}
q_k \left( \varphi(q_k) + \frac{m -q_k}{q_k}\|q_{k}\theta\| \right)^s + \sum_{n = m +1}^{q_{k+1}-1} \varphi(n)^s.
\end{equation}
Then we have
\begin{multline*}
\hdim (E_\varphi(\theta) ) \\
= \inf \left \{ s > 0 : \sum_{k=0}^\infty \left[ q_k \left( \varphi(q_k) + \frac{q_{k,s}-q_k}{q_k}\|q_{k}\theta\| \right)^s + \sum_{n = q_{k,s}+1 }^{q_{k+1}-1} \varphi(n)^s \right] < \infty \right \}.
\end{multline*}
\end{theorem}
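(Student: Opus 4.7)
The plan is to prove matching upper and lower bounds scale-by-scale, with the relevant scales set by the convergent denominators $q_k$ of $\theta$. Two structural ingredients drive everything. First, a three-distance-type observation: for $q_k \leq m < q_{k+1}$, the orbit points $\{n\theta \bmod 1 : q_k \leq n \leq m\}$ split into at most $q_k$ \emph{clusters} grouped by residue modulo $q_k$, and the cluster with residue $i$ sits on an arc of length at most $\frac{m - q_k}{q_k}\|q_k\theta\|$ around $i\theta \bmod 1$. Second, $E_\varphi(\theta)$ is quasi-invariant under rotation by $\theta$ (from $\|(n+1)\theta - (y+\theta)\| = \|n\theta - y\|$ and the monotonicity of $\varphi$), so any small neighborhood of $E_\varphi(\theta)$ has $q_k$ near-translates under rotation by $\theta, 2\theta, \ldots, q_k\theta$.

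\medskip

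\noindent\emph{Upper bound.} Fix $s$ above the claimed infimum. For each $k$, cover $\bigcup_{q_k \leq n < q_{k+1}} B(n\theta, \varphi(n))$ by (i) one ball of radius $\varphi(q_k) + \frac{q_{k,s} - q_k}{q_k}\|q_k\theta\|$ per cluster of indices in $[q_k, q_{k,s}]$, the common radius being justified by the monotonicity of $\varphi$, and (ii) the balls $B(n\theta, \varphi(n))$ for $q_{k,s} < n < q_{k+1}$ individually. The $s$-cost equals the $k$-th summand of the theorem. Since $E_\varphi(\theta) \subset \bigcup_{k \geq K} \bigcup_{q_k \leq n < q_{k+1}} B(n\theta, \varphi(n))$ for every $K$, letting $K \to \infty$ forces the $s$-dimensional Hausdorff measure to vanish, hence $\hdim E_\varphi(\theta) \leq s$.

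\medskip

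\noindent\emph{Lower bound.} For $s$ strictly below the infimum the series diverges. We construct a Cantor-type subset of $E_\varphi(\theta)$ along a subsequence of levels $(k_j)$ chosen so that no single term dominates. Each generation's basic intervals, placed inside every parent interval, are the $q_{k_j}$ cluster intervals of radius $\rho_{k_j} = \varphi(q_{k_j}) + \frac{q_{k_j,s} - q_{k_j}}{q_{k_j}}\|q_{k_j}\theta\|$, obtained as $i\theta$-translates of a representative cluster (which fit inside the parent thanks to the quasi-invariance), together with the surviving tail balls $B(n\theta, \varphi(n))$ for $q_{k_j,s} < n < q_{k_j+1}$. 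Equip the limit set with a Frostman-type probability measure $\mu$ that distributes each parent's mass among its children in proportion to $r^s$, where $r$ is the child's radius. Verifying the local estimate $\mu(B(y, r)) \lesssim r^s$ at all intermediate scales and invoking the mass distribution principle yields $\hdim E_\varphi(\theta) \geq s$.

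\medskip

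\noindent\emph{Main obstacle.} The crux is the Frostman estimate for $\mu$ at scales between two consecutive Cantor generations, where a single small ball may meet several fat cluster children and several tail children simultaneously. Controlling the resulting total mass demands sharp separation estimates---a quantitative refinement of the three-distance theorem---combined with the fact that the optimizing choice $m = q_{k,s}$ balances the cluster and tail contributions exactly, so that neither regime overwhelms the other. A secondary technical point is a uniform treatment of the extreme optimizers $q_{k,s} = q_k$ (pure tail cover) and $q_{k,s} = q_{k+1}-1$ (pure cluster cover); these should emerge as limiting cases of a single construction parameterized continuously by $m$.
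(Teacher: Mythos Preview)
Your upper bound matches the paper's: group orbit points by residue modulo $q_k$, cover each residue class up to index $q_{k,s}$ by one long interval and the remaining balls individually.

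The lower bound, however, has a genuine gap. You take as basic intervals of the Cantor construction the \emph{cluster intervals} of radius $\rho_{k_j}=\varphi(q_{k_j})+\frac{q_{k_j,s}-q_{k_j}}{q_{k_j}}\|q_{k_j}\theta\|$. But a cluster interval is a covering object, not a subset of $\bigcup_n B(n\theta,\varphi(n))$: when $\varphi(q_k)\ll\|q_k\theta\|$ it consists mostly of gaps between the orbit balls it is meant to summarize. A point lying in a cluster interval at every level of your tree need not satisfy $\|n\theta-y\|<\varphi(n)$ for a single $n$, so your Cantor set is not visibly contained in $E_\varphi(\theta)$, and the Frostman bound---even if it held---would estimate the dimension of the wrong set. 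Replacing the clusters by the individual balls $B(n\theta,\varphi(n))$ would restore the inclusion but wreck the mass estimate you sketch, since inside one parent the children would now have wildly varying radii governed by the unknown decay of $\varphi$ on $[q_k,q_{k+1})$.

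The paper sidesteps this entirely. It never puts a measure on cluster intervals; instead it invokes a black-box dimension formula of Liao--Rams for sets $\bigcap_i F_i$ where each $F_i$ is a union of \emph{uniform-radius} balls $B(n\theta,\tfrac12 q_{k_i}^{-K_i})$ with $n\le m_i$, which \emph{is} contained in $E_\varphi(\theta)$ as soon as $q_{k_i}^{-K_i}\le 2\varphi(m_i)$. All the work goes into choosing $(k_i,m_i,K_i)$ so that the Liao--Rams output is $\ge s-\varepsilon$, and this forces a three-way case split according to which piece of the divergent sum is responsible: (i) $\sum q_k\varphi(q_k)^s=\infty$ (take $m_i=q_{k_i}$); (ii) the cluster term $\sum q_k\bigl(\frac{q_{k,s}-q_k}{q_k}\|q_k\theta\|\bigr)^s$ diverges (take $m_i=q_{k_i,s}$); (iii) only the tail term diverges (take $m_i$ to be the last $n<q_{k_i+1}$ with $\varphi(n)\ge n^{-1/(s-\varepsilon)}$). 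The decisive step, and the only place where the \emph{minimality} of $q_{k,s}$ is actually used for the lower bound, occurs in case~(ii): comparing $q_{k,s}$ with $q_{k,s}-1$ in the defining minimization yields $\varphi(q_{k,s})^s\ge s\,\|q_k\theta\|\,\rho_k^{\,s-1}$, which converts information about the cluster radius $\rho_k$ into a lower bound on an honest ball radius $\varphi(q_{k,s})$. Your proposal has no analogue of this conversion, and without it the bridge from cluster geometry back to membership in $E_\varphi(\theta)$ cannot be built.
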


Though this statement is complicated, easy calculations lead to the earlier results:
we deduce \eqref{eq:LR} by Liao and Rams and \eqref{eq:Xu} by Xu from Theorem~\ref{thm} in Section~\ref{sec:relations}.
In Section~\ref{sec:MTP},  we discuss the lower bound of the Hausdorff dimension by the mass transference principle from \cite{BV}.
The proof of the main theorem is given in Section~\ref{sec:proof}.

The proof of the main theorem strongly depends on the result of Liao and Rams.
The following proposition, which is going to be crucial in our work, is Proposition 2.3 of \cite{LR}:

\begin{proposition}[\cite{LR}]\label{prop:LR}
Suppose $K > 1$ and $N >1$.
Let $\{k_i\}$ be a sufficiently fast increasing sequence of integers, for which $\frac{\log q_{k_i+1}}{\log q_{k_i}} \to B$. 
Let $\{m_i\}$ be a sequence of reals satisfying $q_{k_i} \le m_i = q_{k_i}^N < q_{k_i+1}$. 
Denote
$$F_i := \left\{ y : \| n\alpha - y \| < \frac 1{2q_{k_i}^{K}} \text{ for some } m_{i-1} < n \le m_i \right\}. $$
Then
$$\hdim \left(\bigcap_{i =1}^\infty F_i \right) =\min \left\{\frac{N}{K}, \, \max \Big\{\frac{1}{K}, \frac{1}{1+B-N}\Big\}\right\}. $$
\end{proposition}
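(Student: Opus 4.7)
The plan is to combine two covering arguments with a multi-branch Cantor construction, exploiting the three-distance theorem to describe the geometry of the points $\{n\alpha\}_{n\le m_i}$. Because $\{k_i\}$ is chosen sufficiently fast-growing, $m_{i-1}=q_{k_{i-1}}^N$ is negligible compared to $q_{k_i}$, so $F_i$ is well-approximated by the $\frac{1}{2q_{k_i}^K}$-neighbourhood of $\{n\alpha : 1\le n\le q_{k_i}^N\}$. The three-distance theorem organizes these $m_i = q_{k_i}^N$ points into $q_{k_i}$ clusters anchored at the base points $\{j\alpha\}_{j\le q_{k_i}}$ (mutual spacing $\sim 1/q_{k_i}$); each cluster contains $\sim q_{k_i}^{N-1}$ points at internal spacing $\|q_{k_i}\alpha\|\sim q_{k_i}^{-B}$ and so has total extent $\sim q_{k_i}^{-(1+B-N)}$.

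For the upper bound I would use two covers of $F_i$. The trivial cover by $m_i$ intervals of length $q_{k_i}^{-K}$ gives $\sum (\mathrm{diam})^s \asymp q_{k_i}^{N-sK}$, bounded when $s\ge N/K$. The cluster cover merges all points of each cluster into a single interval whose length is the larger of the thickening $q_{k_i}^{-K}$ and the cluster extent $q_{k_i}^{-(1+B-N)}$, namely $q_{k_i}^{-\min\{K,\,1+B-N\}}$; the $q_{k_i}$ such intervals contribute $\sum (\mathrm{diam})^s \asymp q_{k_i}^{1-s\min\{K,\,1+B-N\}}$, bounded when $s\ge \max\{1/K,\,1/(1+B-N)\}$. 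Since $\bigcap_i F_i \subset F_i$ for every $i$ and the diameters tend to zero along $i$, taking the sharper cover at each level yields $\hdim(\bigcap_{i\ge 1}F_i) \le \min\{N/K,\;\max\{1/K,\,1/(1+B-N)\}\}$.

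For the matching lower bound, call the claimed value $s^*$ and build a Cantor-like subset $E\subset\bigcap_i F_i$ together with a Frostman measure on $E$ of exponent approaching $s^*$. Inside each parent selected at level $i-1$, the children at level $i$ are chosen by one of three schemes tuned to a candidate for $s^*$: take the thickenings of every available point in the parent (scale $q_{k_i}^{-K}$); take one thickened point per base cell intersecting the parent (again scale $q_{k_i}^{-K}$); or take one whole cluster per base cell intersecting the parent (scale $q_{k_i}^{-(1+B-N)}$). For rapidly growing $\{k_i\}$ the total number of level-$i$ intervals is asymptotically dominated by the last factor---namely $q_{k_i}^N$ in the first scheme and $q_{k_i}$ in the other two---so the ratio $\log(\text{count})/\log(\text{inverse scale})$ tends respectively to $N/K$, $1/K$, or $1/(1+B-N)$. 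A uniform mass distribution on the resulting limit set then supports a Frostman measure of the target dimension.

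The main obstacle is verifying the Frostman inequality $\mu(B(x,r))\lesssim r^{s^*-\varepsilon}$ at intermediate scales $r$ sandwiched between the level-$i$ and level-$(i-1)$ scales, where a ball may intersect several level-$i$ intervals without containing any level-$(i+1)$ interval in full. This calls for a case split depending on whether $r$ is comparable to the base spacing $1/q_{k_i}$, the cluster extent $q_{k_i}^{-(1+B-N)}$, or the thickening $q_{k_i}^{-K}$; in each regime the three-distance geometry bounds how many level-$i$ children $B(x,r)$ can meet. The hypothesis that $\{k_i\}$ grows sufficiently fast is essential here, as it makes successive scale ratios negligible and decouples the intermediate-scale bound from the mass carried at the previous level. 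A secondary subtlety is confirming that further merging of adjacent clusters at scales coarser than $1/q_{k_i}$ cannot yield a cover better than the two given, for which the assumption $K>1$ is used to keep the thickening strictly below the base spacing.
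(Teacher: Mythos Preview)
This proposition is not proved in the present paper; it is quoted as Proposition~2.3 of \cite{LR} and invoked without argument, so there is no in-paper proof to compare your proposal against. That said, your outline is essentially the method of \cite{LR}: the upper bound via the two natural covers of $F_i$ (the $m_i$ individual balls versus the $q_{k_i}$ merged clusters supplied by the three-distance structure), and the lower bound via a Cantor construction carrying a mass-distribution/Frostman measure, with the branching rule chosen at the ball scale or the cluster scale according to which of $N/K$, $1/K$, $1/(1+B-N)$ realises the minimum. Your remark that the fast growth of $\{k_i\}$ is what decouples consecutive levels in the Frostman estimate is exactly how this hypothesis is exploited in \cite{LR}.
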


We will also need a version of this result which was not proven in \cite{LR}, but the proof of which is basically identical.

\begin{proposition}\label{prop:LR2}
Let $\{k_i\}$ be a sufficiently fast increasing sequence of integers. 
Let $\{K_i\}$ and $\{m_i\}$ be two sequences of positive reals satisfying $K_i > 1$ and $q_{k_i} \le m_i: = q_{k_i}^{N_i} < q_{k_i+1}$. 
Write $\frac{\log q_{k_i+1}}{\log q_{k_i}}=B_i$. 
Assume that $K_i$, $B_i-N_i$ and $\frac{K_i}{N_i}$ converge to $K,  D,  R$ respectively.
Define
$$F_i := \left\{ y : \| n\alpha - y \| < \frac 1{2q_{k_i}^{K_i}} \text{ for some } m_{i-1} < n \le m_i \right\}. $$
Then
$$\hdim \left(\bigcap_{i =1}^\infty F_i \right) = \min \left\{\frac{1}{R}, \, \max \Big\{\frac{1}{K}, \frac{1}{1+D}\Big\}\right\}.$$
\end{proposition}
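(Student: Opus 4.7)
The strategy is to trace the proof of Proposition~\ref{prop:LR} of \cite{LR} with the constant parameters replaced by the sequences $K_i$, $N_i$, $B_i$; the argument of \cite{LR} uses these numbers only through the exponents they produce in powers of $q_{k_i}$, and the hypotheses ensure the relevant exponents all converge.

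For the upper bound, I would build two coverings of $F_i$ and take the more efficient one. The naive cover takes $m_i \le q_{k_i}^{N_i}$ intervals of length $2q_{k_i}^{-K_i}$, one around each $n\alpha$ for $m_{i-1}<n\le m_i$, giving the covering exponent $N_i/K_i\to 1/R$. The cluster cover uses the three-distance structure: decomposing $n=jq_{k_i}+r$ with $0\le r<q_{k_i}$, the points $\{n\alpha\}$ with fixed $r$ form a near-arithmetic progression of step $\|q_{k_i}\alpha\|\approx q_{k_i}^{-B_i}$ and total spread $\approx q_{k_i}^{N_i-1-B_i}=q_{k_i}^{-(1+D_i)}$, where $D_i=B_i-N_i$. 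Enclosing each such cluster together with its balls gives a cover by $q_{k_i}$ intervals of length comparable to $\max(q_{k_i}^{-K_i},q_{k_i}^{-(1+D_i)})$, producing the exponent $\max(1/K_i,1/(1+D_i))$. Taking the minimum of the two covering exponents and passing to the limit yields the target $\min\{1/R,\max(1/K,1/(1+D))\}$; a standard limsup-set / Hausdorff measure estimate (as in Lemma~3.1 of \cite{LR}) upgrades this to an upper bound on $\hdim(\bigcap_i F_i)$, provided $\{k_i\}$ grows fast enough.

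For the matching lower bound I would reproduce the Cantor-set construction of \cite{LR}. At stage $i$ one selects a well-separated subfamily of the balls or clusters forming $F_i$, chosen so that their cardinality and diameter realize the exponent $\min\{1/R_i,\max(1/K_i,1/(1+D_i))\}$. Distributing uniform mass across the chosen pieces and invoking the mass distribution principle yields the desired lower bound; the three regimes (which of $1/R_i$, $1/K_i$, $1/(1+D_i)$ is the minimax value) correspond to three variants of the construction, exactly as in \cite{LR}.

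The main technical point is bookkeeping rather than any new idea. One must verify that the regime is stable for large $i$, which is automatic away from the boundary values of $(K,R,D)$ and handled by a small perturbation argument on the boundary (both bounds are continuous in the three limiting parameters). The growth rate of $\{k_i\}$ must be chosen fast enough that (i) the logarithmic fluctuations $|K_i-K|\log q_{k_i}$, and similarly for the other parameters, are dominated by $\log q_{k_i}$, and (ii) successive Cantor levels nest correctly and the mass distribution principle applies; both conditions follow immediately from the convergence hypotheses on $K_i$, $B_i-N_i$ and $K_i/N_i$.
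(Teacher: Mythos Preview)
Your proposal is correct and matches the paper's own treatment: the paper gives no separate proof of Proposition~\ref{prop:LR2}, stating only that ``the proof of which is basically identical'' to that of Proposition~2.3 in \cite{LR}, which is precisely the strategy you outline (two competing covers for the upper bound, the Cantor/mass-distribution construction for the lower bound, with the constants $K,N,B$ replaced by the convergent sequences $K_i,N_i,B_i$). Your bookkeeping remarks about regime stability and the required growth rate of $\{k_i\}$ are exactly the only points where one must be slightly careful beyond the original argument.
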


We remark that the exact growth rate of $\{k_i\}_{i\ge 1}$ necessary for Propositions \ref{prop:LR} and \ref{prop:LR2} is not going to be important for us.

\section{Mass transference principle}\label{sec:MTP}

The mass transference principle, developed by Beresnevich and Velani \cite{BV}, is a
powerful tool to determine the Hausdorff measure and dimension of a limsup set.
For example, equipped with the Minkowski's result, the dimensional result of (\ref{f1}) can be obtained by using the mass transference principle directly.
Let $B(x_i, r_i)$ be a sequence of balls in $[0,1)$ with centers $x_i$'s and radii $r_i$'s with $r_i \to 0$.
If $ \limsup B(x_i, r_i^s )$ has the full Lebesgue measure for $0 < s \le1$, then  
it follows by the mass transference principle \cite{BV} that
$$ \hdim \Big( \limsup B(x_i, r_i) \Big) \ge s. $$
Combining \eqref{FK} with the mass transference principle,
we have
\begin{equation}\label{ineq:MT}
\hdim (E_\varphi(\theta) ) \ge \inf\left\{s\ge 0: \sum_{k=0}^{\infty} \sum_{n=q_k}^{q_{k+1}-1} \min\Big\{ \varphi(n)^s, \|q_k\theta\|\Big\}<\infty\right\}.
\end{equation}
However,
this may not be the exact dimension of $E_\varphi(\theta)$.

For each $0 \le s \le 1$, we have
\begin{equation*}\begin{split}
\sum_{n=q_k}^{q_{k+1}-1} \min \Big\{ \varphi(n)^s, \|q_k\theta\|\Big\}
&\le \varphi(q_k) + (q_{k,s}-q_k)\|q_{k}\theta\| + \sum_{n = q_{k,s}+1 }^{q_{k+1}-1} \varphi(n)^s\\
&\le q_k \left( \varphi(q_k) + \frac{q_{k,s}-q_k}{q_k}\|q_{k}\theta\| \right) + \sum_{n = q_{k,s}+1 }^{q_{k+1}-1} \varphi(n)^s\\
&\le q_k \left( \varphi(q_k) + \frac{q_{k,s}-q_k}{q_k}\|q_{k}\theta\| \right)^s + \sum_{n = q_{k,s}+1 }^{q_{k+1}-1} \varphi(n)^s.
\end{split}
\end{equation*}
Therefore, the lower bound given by \eqref{ineq:MT} is less than or equal to the Hausdorff dimension given by Theorem~\ref{thm}.
However the following example shows that the lower bound may not be sharp.

\begin{example}
Choose an irrational $\theta$ given by the partial quotients $a_{k+1} = q_k^{2}$
and the error function $\varphi(n) = 1/2q_{k}^{3}$ for each $n \in (q_{k-1}^2, q_k^2]$.
By elementary calculus we get $q_{k,s} = q_k^2$. Thus,
\begin{align*}
q_k \left( \varphi(q_k) + \frac{q_{k,s}-q_k}{q_k}\|q_{k}\theta\| \right)^s + \sum_{n = q_{k,s}+1 }^{q_{k+1}-1} \varphi(n)^s
&< q_k^{s+1} \| q_k \theta\|^s + \frac{q_{k+1}}{2^sq_{k+1}^{3s}} \\
&< \frac{1}{q_k^{2s-1}} + \frac{1}{2^sq_{k+1}^{3s-1}},
\end{align*}
which converges for $s > 1/2$.
We also have
\begin{align*}
q_k \left( \varphi(q_k) + \frac{q_{k,s}-q_k}{q_k}\|q_{k}\theta\| \right)^s + \sum_{n = q_{k,s}+1 }^{q_{k+1}-1} \varphi(n)^s
&> q_k (q_k -1 )^s \| q_k \theta\|^s > \frac{q_k^{1+s}}{2^{3s} q_k^{3s}},
\end{align*}
which diverges for $s <1/2$.
Therefore, the Hausdorff dimension of $E_\varphi(\theta)$ is 1/2.

On the other hand, for $1/3 < s <1$,
we have
$$\sum_{q_k\le n<q_{k+1}} \min\Big\{ \varphi(n)^s, \|q_k\theta\|\Big\}
= (q_k^2 -q_k +1)\| q_k \theta\| + \frac{q_{k+1}-q^2_k -1}{2q_{k+1}^{3s}}
<\frac{1}{q_k} + \frac{1}{2q_{k+1}^{3s-1}},
$$
which is a convergent series.
If $0 < s <1/3$, then for large $k$ so that $a_{k+1} = q_k^2 > 3$
$$\sum_{q_k\le n<q_{k+1}} \min\Big\{\varphi(n)^s,  \|q_k\theta\|\Big\}
=(q_{k+1} - q_k) \| q_k\theta\| > \frac{q_{k+1} - q_k}{q_{k+1} + q_k} > \frac 12, $$
which is a divergent series.
Hence, the lower bound of the Hausdorff dimension given by \eqref{ineq:MT} is 1/3.
\end{example}

\section{Proof of the main theorem}\label{sec:proof}

Let $\theta=[a_1,a_2,\cdots]$ be the continued fraction expansion of $\theta$ and $\{q_k\}$ its denominators of the convergents, which satisfy $q_{k+1}=a_{k+1}q_k+q_{k-1}$.
Since $q_{k+1} \ge 2 q_{k-1}$, it increases exponentially. Thus 
for any $\varepsilon >0$
$$\sum_{k=0}^\infty \frac{1}{q_k^\varepsilon} < \infty.$$
We will use this convergence implicitly.

The Three Distances Theorem (e.g., \cite{Ha}) plays a central role in the proof.
 It states that for each positive interger $N$, the gaps between two consecutive points of $\langle\theta\rangle, \langle2\theta\rangle , \dots ,\langle N\theta \rangle$ have at most three lengths if the points are arranged in ascending order.
In the special case, 
when $N= q_{k+1}$ the gap between two neighboring points in $\{  \langle \theta\rangle, \langle2\theta\rangle, \dots, \langle q_{k+1} \theta \rangle \}$ is $\|q_{k}\theta\|$ or $\|q_{k}\theta\| + \|q_{k+1}\theta\|$.
To simplify the notation, we view $n\theta$ as a number on the unit circle $S^1 = \mathbb R/ \mathbb Z$.

\begin{proof}[Proof of the upper bound]
To give an
upper bound for the Hausdorff dimension of $E_\varphi(\theta)$, we need to find a suitable cover.
We will start by covering not the set $E_\varphi(\theta)$ itself (which is dense in $S^1$) but rather the set
$$
E_k=\bigcup_{q_k\le n<q_{k+1}}B(n\theta, \varphi(n))
$$
for each $k\ge 1$.
Since $E_{\varphi}(\theta)=\limsup_{k\to \infty}E_k$, the covers of $E_k$ let us construct a cover of $E_{\varphi}(\theta)$.
Before the main argument, we explain the idea of looking for an optimal cover of $E_k$.

At first, we arrange
 the balls $B(n\theta, \varphi(n))$ in $E_k$ into $q_k$ groups according to the positions of $n\theta$, $q_k\le n<q_{k+1}$:
\begin{align}\label{group} 
                \begin{array}{ll}
                  \big\{B(n\theta, \varphi(n)): n=tq_k+i, \ 1\le t\le a_{k+1}\big\}, & \hbox{when $0\le i<q_{k-1}$;} \\
                  \big\{B(n\theta, \varphi(n)): n=tq_k+i, \ 1\le t<a_{k+1}\big\}, & \hbox{when $q_{k-1}\le i<q_{k}$.}
                \end{array}
\end{align}

Inside each group, it may happen that some balls are overlapping or otherwise sufficiently close so that it will be more efficient to cover several of them with one interval. So, we cover the balls close enough by a long interval, while for other well separated balls, we cover them by themselves. Note that the distances between the centers of neighboring balls are equal (Three Distances Theorem) while their diameters shrink (monotonicity of $\varphi$), hence we get the following picture: one long interval followed by a sequence of short intervals. This gives a cover of $E_k$. The role of $q_{k,s}$ plays is just to optimize the summation of the $s$-volume of such a cover.

Now let's give the detailed argument. Let $0 < s \le 1$ be a real number satisfying that
$$ \sum_{k=0}^\infty \left[ q_k \left( \varphi(q_k) + \frac{q_{k,s} -q_k}{q_k}\|q_{k}\theta\| \right)^s + \sum_{n = q_{k,s}+1}^{q_{k+1}-1} \varphi(n)^s \right] < \infty . $$

For each $k\ge 1$, inside each group given in (\ref{group}), we cover the balls $\{B(n\theta, \varphi(n))\}$ with $n\le q_{k,s}$ by a long interval.
More precisely, let $c$, $r$ be the integers such that
$q_{k,s} = c q_k + r$ with $c \ge 1$, $0 \le r \le q_k -1$.
Recall that $\| q_k \theta \|  = (-1)^k(q_k \theta - p_k)$. 

If $c \ge2$, then let for even $k$
\begin{equation*}
C_{k,i} :=
\begin{cases}
\Big( (q_k + i) \theta - \varphi(q_k),  ( c q_k + i) \theta + \varphi(q_k) \Big)  & \text{ if } 0 \le i \le r, \\
\Big( (q_k + i) \theta - \varphi(q_k),  ( (c-1) q_k + i) \theta + \varphi(q_k) \Big)& \text{ if } r < i < q_k.
\end{cases}
\end{equation*}
and for odd $k$
\begin{equation*}
C_{k,i} :=
\begin{cases}
\Big( ( cq_k + i) \theta - \varphi(q_k), (q_k+i) \theta + \varphi(q_k) \Big) & \text{ if } 0 \le i \le r, \\
\Big( ( (c-1)q_k + i) \theta - \varphi(q_k), (q_k + i) \theta + \varphi(q_k) \Big)& \text{ if } r < i < q_k.
\end{cases}
\end{equation*}
If $c=1$, then for $0 \le i \le r$ let
\begin{equation*}
C_{k,i} :=
\begin{cases}
\Big( (q_k + i) \theta - \varphi(q_k), (q_k + i) \theta + \varphi(q_k) \Big), &\text{ for even } k,  \\
\Big( ( q_k + i) \theta - \varphi(q_k), (q_k+i) \theta + \varphi(q_k) \Big), &\text{ for odd } k  .
\end{cases}
\end{equation*}
and for $r < i < q_k$ let $ C_{k,i} :=  \emptyset .$
Then we have
\begin{equation*}
\bigcup_{q_k \le n \le q_{k,s}} B\left (n\theta, \varphi(n) \right)
\subset \bigcup_{q_k \le n \le q_{k,s}} B\left (n\theta, \varphi(q_k) \right)
\subset  \bigcup_{0 \le i < q_k} C_{k,i}.
\end{equation*}
Since
\begin{multline*}
(r+1) (2\varphi(q_k) + (c -1)\| q_k \theta\| )^s + (q_k - r -1) (2\varphi(q_k) + (c -2)\| q_k \theta\| )^s \\
\le q_k (2\varphi(q_k) + (c-1)\| q_k \theta\| )^s
< 2 q_k \left( \varphi(q_k) + \left(\frac{q_{k,s}}{q_k} -1 \right) \|q_{k}\theta\| \right)^s,
\end{multline*}
we find a covering of the set
\begin{equation*}
E_k = \bigcup_{q_k \le n < q_{k+1}} B\left (n\theta, \varphi(n) \right)
\subset \left(  \bigcup_{0 \le i < q_k} C_{k,i} \right) \cup \left( \bigcup_{q_{k,s} < n < q_{k+1}} B\left (n \theta, \varphi(n) \right) \right)
\end{equation*}
such that the sum of $s$-th powers of the lengths of covering intervals is bounded by
$$
2q_k \left( \varphi(q_k) +\frac{q_{k,s}-q_k}{q_k} \|q_{k}\theta\| \right)^s + 2\sum_{n = q_{k,s}+1}^{q_{k+1}-1} \varphi(n)^s. $$
As the union over covers of $E_k$ is a cover of $E_\varphi(\theta)$,
it implies that $\hdim(E_\varphi(\theta)) \le s$.
\end{proof}

In order to prove the lower bound, we use a lemma which is a direct consequence of  Proposition~\ref{prop:LR2}.
\begin{lemma}\label{lem:LR3}
Let $\{k_i\}$ be a sufficiently fast increasing sequence of integers. 
Let $\{K_i\}$ and $\{m_i\}$ be two sequences of positive reals satisfying $K_i > 1$ and $q_{k_i} \le m_i: = q_{k_i}^{N_i} < q_{k_i+1}$. 
Write $\frac{\log q_{k_i+1}}{\log q_{k_i}}=B_i$. 
Assume that $1+B_i-N_i \le R$ and $\frac{K_i}{N_i} \le R$.
Then
$$\hdim \left(\bigcap_{i =1}^\infty  \Big\{ y : \| n\alpha - y \| < \frac 1{2q_{k_i}^{K_i}} \text{ for some } m_{i-1} < n \le m_i \Big\} \right)  \ge \frac{1}{R}.$$
\end{lemma}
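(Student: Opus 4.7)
The plan is to rerun the lower-bound construction underlying Proposition~\ref{prop:LR2}, observing that its convergence hypothesis is used only to pin down the \emph{exact} value $\min\{1/R, \max\{1/K, 1/(1+D)\}\}$; the lower bound alone requires only the uniform upper bounds $K_i/N_i \le R$ and $1+B_i-N_i \le R$, which are exactly the hypotheses of the lemma. That these bounds give the inequality $1/R$ at the level of the formula is elementary: $1/R_i \ge 1/R$ when $R_i := K_i/N_i \le R$, and $1/(1+B_i-N_i) \ge 1/R$ when $1+B_i-N_i \le R$, so both $\min\{1/R_i, \max\{1/K_i, 1/(1+B_i-N_i)\}\} \ge 1/R$ at every level.

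Concretely, the proof of Proposition~\ref{prop:LR2} builds a Cantor-like subset of $\bigcap_i F_i$ generation by generation: at generation $j$, each surviving interval $I$ from generation $j-1$ is replaced by a disjoint family of balls of radius $\tfrac12 q_{k_j}^{-K_j}$ centered at points $n\alpha$ with $n \in (m_{j-1}, m_j]$. The Three Distances Theorem controls both the number of such centers falling inside $I$ (of order $q_{k_j}^{N_j-1}\cdot|I|$) and their pairwise separation, which in turn controls the disjointness of the children. I would (i) extract the precise level-$j$ branching estimate from the proof of Proposition~\ref{prop:LR2}, isolating the dependence on $K_j$, $N_j$, and $B_j-N_j$; (ii) substitute the lemma's hypotheses and verify, using that $s=1/R$ satisfies both $sK_j \le N_j$ and $s(1+B_j-N_j)\le 1$, that the $s$-dimensional mass distributed uniformly on the Cantor set is bounded below at every scale; (iii) apply the mass-distribution principle to conclude $\hdim\bigcap_i F_i \ge 1/R$.

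The main obstacle, I believe, is to resist a tempting but incorrect compactness shortcut: extracting a subsequence $\{i_j\}$ along which $K_{i_j}$, $B_{i_j}-N_{i_j}$, $K_{i_j}/N_{i_j}$ converge, and directly applying Proposition~\ref{prop:LR2} on the subsequence, would yield $\hdim\bigcap_j F_{i_j} \ge 1/R$---but since $\bigcap_i F_i \subseteq \bigcap_j F_{i_j}$, this only bounds the full intersection from \emph{above}. One therefore has to commit to re-examining the level-by-level calculation in \cite{LR} and confirming that its estimate depends only on the uniform boundedness of $K_j/N_j$ and $1+B_j-N_j$, not on their convergence. This verification is routine but not automatic, since the same bounds are appearing on both sides of the estimate: $sK_j\le N_j$ is exactly what makes enough children fit inside each parent at exponent $s$, and $s(1+B_j-N_j)\le 1$ is exactly what makes the children's $s$-mass in each parent bounded below uniformly in $j$.
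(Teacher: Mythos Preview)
Your proposal is correct and aligns with the paper's treatment: the paper states only that the lemma ``is a direct consequence of Proposition~\ref{prop:LR2}'' and gives no further argument, which should be read exactly as you do---the lower-bound Cantor construction underlying that proposition needs only the uniform inequalities $K_i/N_i\le R$ and $1+B_i-N_i\le R$ at each level, not convergence. Your warning about the subsequence shortcut is well taken: passing to a subsequence enlarges $\bigcap_i F_i$, so one cannot deduce the lemma from the \emph{statement} of Proposition~\ref{prop:LR2} alone and must indeed revisit the level-by-level estimate in \cite{LR}.
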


\begin{proof}[Proof of the lower bound]
Suppose that for some $0 < s < 1$
$$ \sum_{k=0}^\infty \left[ q_k \left( \varphi(q_k) + \frac{q_{k,s} -q_k}{q_k}\|q_{k}\theta\| \right)^s + \sum_{n = q_{k,s}+1}^{q_{k+1}-1} \varphi(n)^s \right] = \infty . $$
We have to show that $\hdim (E_\varphi(\theta)) \ge s -\varepsilon$ for any $\varepsilon >0$.

If  
\begin{equation*} 
\sum_{k=0}^\infty  q_k \left( \varphi(q_k) + \frac{q_{k,s}-q_k}{q_k}\|q_{k}\theta\| \right)^s = \infty,
\end{equation*}
then either 
\begin{equation}\tag{i} 
\sum_{k=0}^\infty  q_k \varphi(q_k)^s = \infty
\end{equation}
or
\begin{equation}\tag{ii}
\sum_{k=0}^\infty  q_k \varphi(q_k)^s < \infty \ \text{ and } \
 \sum_{k=0}^\infty  q_k \left( \frac{q_{k,s} -q_k}{q_k}\|q_{k}\theta\| \right)^s = \infty.
\end{equation}
Otherwise, 
\begin{equation}\tag{iii} 
\sum_{k=0}^\infty  q_k \left( \varphi(q_k) + \frac{q_{k,s}-q_k}{q_k}\|q_{k}\theta\| \right)^s < \infty  \ \text{ and }  \ \sum_{k=0}^\infty \left( \sum_{n = q_{k,s}+1}^{q_{k+1}-1} \varphi(n)^s \right) = \infty.
\end{equation}

\bigskip

Proof of Case (i) : 

\smallskip

From the condition of Case (i) 
for each $\varepsilon >0$, there exists a subsequence $\{k_i\}$ such that
$$
\varphi(q_{k_i}) > \left( \frac{1}{q_{k_i}}\right)^{\frac{1}{s-\varepsilon}}.
$$
We can freely assume that this subsequence $\{k_i\}$ grows fast enough for us to apply Proposition~\ref{prop:LR2} later on.
Put $m_i = q_{k_i}$. Then
$$\varphi(m_i) > \left( \frac{1}{q_{k_i}}\right)^{\frac{1}{s-\varepsilon}} \ge \frac{1}{2q_{k_i}^K} \quad \text{with}\  K=\frac{1}{s-\varepsilon}.$$
Let
$$E_i := \left\{ y : \| n\alpha - y \| < \frac 1{2q_{k_i}^K} \text{ for some } m_{i-1} < n \le m_i \right\}. $$
Then the monotonicity of $\varphi$ gives that
$$ E := \bigcap_{i =1}^\infty E_i \subset E_\varphi(\theta)$$
and by Proposition~\ref{prop:LR2}
$$\hdim (E) \ge \frac{1}{K} = s -\varepsilon.
$$

\bigskip

Proof of Case (ii) :  

\smallskip

Claim : For each $\varepsilon >0$, there exists a subsequence $\{k_i\}$ such that
\begin{equation}\label{casei}
\frac{q_{k_i,s}-q_{k_i}}{q_{k_i}}\|q_{k_i}\theta\| > \max\left\{\Big( \frac{1}{q_{k_i}} \Big)^{1/(s-\varepsilon)},\  \varphi(q_{k_i})\right\}.
\end{equation}

\begin{proof}[Proof of the Claim]
Suppose not. Then for all $k$ sufficiently large
$$ \frac{q_{k,s}-q_k}{q_{k}}\|q_{k}\theta\| \le \max\left\{\Big( \frac{1}{q_{k}} \Big)^{1/(s-\varepsilon)},\  \varphi(q_{k})\right\}.$$
Thus
\begin{align*}
q_{k} \left( \frac{q_{k,s}-q_k}{q_{k}}\|q_{k}\theta\| \right)^{s}
\le \max \left\{ \left(\frac{1}{q_k}\right)^{\varepsilon/(s-\varepsilon)}, \ q_k \varphi(q_{k})^s \right\},
\end{align*}
hence
\begin{equation*}
\sum_{k=0}^\infty q_{k} \left( \frac{q_{k,s}-q_k}{q_{k}}\|q_{k}\theta\| \right)^{s} < \infty. \qedhere
\end{equation*}
\end{proof}

Let $m_i := q_{k_i,s} $ and define $N_i, B_i, K_i$ so that 
$$q_{k_i}^{N_i} = m_i, \qquad  \frac{1}{q_{k_i}^{B_i}} = \| q_{k_i} \theta \|, \qquad  \frac{1}{q_{k_i}^{K_i}}= \varphi(m_i).$$
Then by the claim
$$
 \frac{1}{q_{k_i}^{K_i}}= \varphi(m_i) \le \varphi(q_{k_i}) < \frac{q_{k_i,s}-q_{k_i}}{q_{k_i}}\|q_{k_i}\theta\|  < \frac{q_{k_i +1}\|q_{k_i}\theta\| }{q_{k_i}} < \frac{1}{q_{k_i}}
$$
and
$$ q_{k_i}^{1 + (s-\varepsilon) (N_i-1-B_i)} = q_{k_i} \left( \frac{q_{k_i,s}}{q_{k_i}}\|q_{k_i}\theta\| \right)^{s-\varepsilon} > q_{k_i} \left( \frac{q_{k_i,s}-q_{k_i}}{q_{k_i}}\|q_{k_i}\theta\| \right)^{s-\varepsilon} >1 , $$
which implies
$$
K_i > 1 
$$
and
\begin{equation}\label{ineq3}
1+B_i - N_i < \frac{1}{s - \varepsilon}. 
\end{equation}

Set
$$F := \bigcap_{i=1}^\infty F_i,$$
where
$$F_i = \left \{ y : \| n\alpha - y \| < \frac{1}{2q_{k_i}^{K_i}}  \text{ for some } m_{i-1} < n \le m_i \right \}. $$
Clearly, $$F \subset E_\varphi(\theta).$$

If $q_{k,s} \ge q_k +1$, then 
by the minimality condition of $q_{k,s}$, we have  
$$ q_k \left( \varphi(q_k) + \frac{q_{k,s} - q_k}{q_k} \| q_k \theta \| \right)^s
\le q_k \left( \varphi(q_k) + \frac{(q_{k,s}- 1)- q_k}{q_k} \| q_k \theta \| \right)^s + \varphi(q_{k,s})^s,  $$  
Therefore,  
\begin{equation*}
\begin{split}
\varphi(q_{k,s})^s &\ge q_k \left( \varphi(q_k) + \frac{q_{k,s} - q_k}{q_k} \| q_k \theta \| \right)^s
- q_k \left( \varphi(q_k) + \frac{q_{k,s}- q_k-1}{q_k} \| q_k \theta \| \right)^s \\
&= q_k \left( \varphi(q_k) + \frac{q_{k,s} - q_k}{q_k} \| q_k \theta \| \right)^s
 \left[ 1 - \left( \frac{q_k\varphi(q_k) + (q_{k,s} - q_k-1)\| q_k\theta\|}{q_k \varphi(q_k) + (q_{k,s}  - q_k) \| q_k\theta\|} \right)^s \right] \\
&= q_k \left( \varphi(q_k) + \frac{q_{k,s} - q_k}{q_k} \| q_k \theta \| \right)^s
 \left[ 1 - \left( 1 - \frac{\| q_k\theta\|}{q_k \varphi(q_k) + (q_{k,s} - q_k) \| q_k\theta\|} \right)^s \right] \\
&> q_k \left( \varphi(q_k) + \frac{q_{k,s} - q_k}{q_k} \| q_k \theta \| \right)^s \cdot \frac{s \| q_k\theta\|}{q_k \varphi(q_k) + (q_{k,s}  - q_k) \| q_k\theta\|} \\
&= s \| q_k\theta\| \left( \varphi(q_k) + \frac{q_{k,s} - q_k}{q_k} \| q_k \theta \| \right)^{-1+s}.
\end{split}
\end{equation*}
Here the second inequality is from the fact that $(1-t)^s < 1-st$ for $0 <s <1$ and $0 < t <1$. 
It is clear from \eqref{casei} that $q_{k_i,s} > q_{k_i}$.
Therefore, combining the claim \eqref{casei} and  the estimation on $\varphi(q_{k,s})$, we get
\begin{align*}
\varphi(m_i)^s &= \varphi(q_{k_i,s})^s  > s \| q_{k_i}\theta\|\left( \varphi(q_{k_i}) + \frac{q_{k_i,s} - q_{k_i}}{q_{k_i}} \| q_{k_i} \theta \| \right)^{-1+s}  \\
&> s \| q_{k_i}\theta\| \left(  \frac{2(q_{k_i,s} - q_{k_i})}{q_{k_i}} \| q_{k_i} \theta \| \right)^{-1+s}
> s \| q_{k_i}\theta\| \left(  \frac{2q_{k_i,s}}{q_{k_i}} \| q_{k_i} \theta \| \right)^{-1+s},
\end{align*}  
which is followed by  
$$\frac{1}{q_{k_i}^{K_i}} = \varphi(m_i) > s^{\frac 1s} \| q_{k_i}\theta\|^{\frac 1s} \left( \frac{2 m_i}{q_{k_i}} \| q_{k_i} \theta \| \right)^{- \frac1s+1} = 2 \left( \frac{s}{2} \right)^{\frac 1s} \frac{1}{q_{k_i}^{B_i} } \left( \frac{1}{q_{k_i}^{N_i-1}}\right)^{\frac 1{s}-1}.
$$
Thus, for large $i$, we get
$$
K_i-\varepsilon < B_i + (N_i-1)\left( \frac 1s -1 \right) =  1 + B_i - N_i +  \frac{1}{s}(N_i-1). 
$$
Applying \eqref{ineq3} we get
$$
K_i - \varepsilon <  \frac{1}{s - \varepsilon} +  \frac{1}{s} (N_i-1),   
$$
thus   
\begin{equation*}\label{bound2}
\frac {K_i} {N_i} < \frac 1s + \frac 1 {N_i} \left(\varepsilon + \frac 1 {s-\varepsilon} - \frac 1s \right) \le \varepsilon + \frac 1 {s-\varepsilon} < \frac{1}{s-\varepsilon(1+s^2)},
\end{equation*} 
where we apply $N_i \geq 1$ for the second inequality.
We conclude that
\begin{equation}\label{ineq4}
\frac{K_i}{N_i} < \frac{1}{s - \varepsilon (1+s^2)} .
\end{equation}

By \eqref{ineq3} and \eqref{ineq4}, from Lemma~\ref{lem:LR3} we have 
$$\hdim( F ) \ge s - \varepsilon(1+s^2). $$

\bigskip

Proof of Case (iii) : 

\smallskip

For each $\varepsilon >0$ we define a subsequence $\{q^*_k\}$ by
$$
q^*_k = \begin{cases}
q_{k,s} &\text{ if } \Lambda_k = \emptyset ,\\
\max \Lambda_k &\text{ if } \Lambda_k \ne \emptyset, \end{cases}
$$
where $$\Lambda_k = \left\{ q_{k,s} < n < q_{k+1} : \varphi(n) \ge \frac 1{n^{1/(s-\varepsilon)}} \right\}.$$
Then we have $q_{k,s} \le q^*_{k} < q_{k+1}$,
\begin{equation}\label{f2} \varphi(n) < \frac 1{n^{1/(s-\varepsilon)}} \ \text{ for } \ q^*_{k} < n < q_{k+1}\end{equation}
and
\begin{equation}\label{f3} \varphi(q^*_{k}) \ge \frac 1{ (q^*_{k})^{1/(s-\varepsilon)} } \ \text{ if } \ q^*_k > q_{k,s}.\end{equation}

By the minimality definition of $q_{k,s}$,
$$ \sum_{k=0}^\infty \left[ q_k \left( \varphi(q_k) + \frac{q^*_{k}-q_k}{q_k}\|q_{k}\theta\| \right)^s + \sum_{n = q^*_{k}+1}^{q_{k+1}-1} \varphi(n)^s \right] = \infty . $$
Since
$$ \sum_{k=0}^\infty \left(  \sum_{n = q^*_{k}+1}^{q_{k+1}-1} \varphi(n)^s \right)
< \sum_{k=0}^\infty \left( \sum_{n = q^*_{k}+1}^{q_{k+1}-1} \frac 1{n^{1 + \frac{\varepsilon}{s-\varepsilon}}} \right) < \sum_{n=1}^\infty \frac 1{n^{1 + \frac{\varepsilon}{s-\varepsilon}}} < \infty , $$
we get
\begin{equation*}
\sum_{k=0}^\infty q_k \left( \varphi(q_k) + \frac{q^*_{k}-q_k}{q_k}\|q_{k}\theta\| \right)^s = \infty .
\end{equation*}
Since
$$\sum_{k=0}^\infty q_k \left( \varphi(q_k) + \frac{q_{k,s}-q_k}{q_k}\|q_{k}\theta\| \right)^s < \infty ,$$
we get
\begin{align*}
\sum_{k=0}^\infty q_k \left(  \frac{q^*_{k} - q_{k,s}}{q_k}\|q_{k}\theta\| \right)^s
&= \infty .
\end{align*}
Therefore, we conclude that for each $\varepsilon >0$ there exists a subsequence $\{k_i\}$ such that
$$ \frac{q^*_{k_i} - q_{k_i,s}}{q_{k_i}} \| q_{k_i}\theta \| >\left( \frac{1}{q_{k_i}} \right)^{1/(s-\varepsilon)}.$$
Thus
\begin{equation}\label{f4} \frac{q^*_{k_i}}{q_{k_i}} \| q_{k_i}\theta \| >\left( \frac{1}{q_{k_i}} \right)^{1/(s-\varepsilon)}
\ \text{ and } \  q^*_{k_i} > q_{k_i,s}. \end{equation}

Put
$$ m_i := q^*_{k_i} = q_{k_i}^{N_i}, \qquad \frac{1}{q_{k_i}^{B_i}} := \| q_{k_i} \theta \|.$$ 
On one hand, by \eqref{f4},
$$ q_{k_i} \left( \frac{q^*_{k_i}}{q_{k_i}} \| q_{k_i}\theta \| \right)^{s-\varepsilon} = q_{k_i}^{1- (s-\varepsilon)(1+B_i-N_i) } > 1, $$
which implies
\begin{equation}\label{ineq6}
1+B_i-N_i < \frac{1}{s-\varepsilon}.
\end{equation}

On the other hand, since $q^*_{k_i} > q_{k_i,s}$ (see \eqref{f4}), by \eqref{f3}, one has that
$$\varphi(m_i)=\varphi(q^*_{k_i})
\ge \left( \frac 1{q^*_{k_i}}\right)^{\frac{1}{s-\varepsilon}} = \left( \frac 1{q_{k_i}^{N_i}}\right)^{\frac{1}{s-\varepsilon}}. $$
Set
\begin{equation}\label{ineq5}
K_i : =\frac{N_i}{s-\varepsilon}>1 
\end{equation} and define
$$G_i := \left \{ y : \| n\alpha - y \| < \frac 1{2q_{k_i}^{K_i}} \text{ for some } m_{i-1} < n \le m_i \right \}. $$
It is clear that
$$ G := \bigcap_{i =1}^\infty G_i \subset E_\varphi(\theta).$$

Therefore, 
by \eqref{ineq6}, \eqref{ineq5} and Lemma~ \ref{lem:LR3}
\begin{equation*}
\hdim (E_\varphi(\theta)) \ge \hdim( G ) \ge s-\varepsilon.  \qedhere
\end{equation*} 
\end{proof}

\section{Bounds for the dimension}\label{sec:relations}

In this section, we show that the previous known dimensional results are deduced by the main theorem.

\begin{corollary}[Schmeling-Troubetzkoy \cite{ScTr}, Bugeaud \cite{Bu}]\label{C1}
$$  l_\varphi \le  \hdim (E) \le u_\varphi. $$
\end{corollary}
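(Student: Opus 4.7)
The plan is to apply Theorem~\ref{thm} directly: we prove $\hdim(E) \le u_\varphi$ by exhibiting $s > u_\varphi$ for which the critical series converges, and $\hdim(E) \ge l_\varphi$ by showing that for $s < l_\varphi$ the series diverges. In both cases we bypass the technical optimizer $q_{k,s}$ by sandwiching the $k$-th summand of Theorem~\ref{thm} between two simpler quantities involving only $q_k\varphi(q_k)^s$ and the block sum $\sum_{n=q_k+1}^{q_{k+1}-1}\varphi(n)^s$.

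For the upper bound, fix $s > u_\varphi$ (we may assume $s \le 1$ since the Hausdorff dimension on $\mathbb R$ is at most $1$) and choose $t$ with $u_\varphi < t < s$. By definition of $u_\varphi$, for all sufficiently large $n$ we have $\varphi(n) \le n^{-1/t}$. The minimality of $q_{k,s}$ in Theorem~\ref{thm} (obtained by taking the trivial choice $m = q_k$ in \eqref{qks}) gives
\begin{equation*}
q_k\!\left(\varphi(q_k)+\tfrac{q_{k,s}-q_k}{q_k}\|q_k\theta\|\right)^{\!s} + \!\!\sum_{n=q_{k,s}+1}^{q_{k+1}-1}\!\!\varphi(n)^s \;\le\; q_k\varphi(q_k)^s + \!\!\sum_{n=q_k+1}^{q_{k+1}-1}\!\!\varphi(n)^s.
\end{equation*}
Then $q_k\varphi(q_k)^s \le q_k^{1-s/t}$, which is summable because $q_k$ grows at least geometrically and $s/t>1$; likewise $\sum_k\sum_{n=q_k+1}^{q_{k+1}-1}\varphi(n)^s \le \sum_{n\ge 1} n^{-s/t} < \infty$. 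Hence the series in Theorem~\ref{thm} converges, giving $\hdim(E) \le s$, and letting $s\downarrow u_\varphi$ yields $\hdim(E)\le u_\varphi$.

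For the lower bound, fix $s < l_\varphi$ and choose $t$ with $s < t < l_\varphi$. By definition of $l_\varphi$, for all sufficiently large $n$ we have $\varphi(n) \ge n^{-1/t}$, so $\varphi(n)^s \ge n^{-s/t}$ with $s/t<1$. Since $\tfrac{q_{k,s}-q_k}{q_k}\|q_k\theta\|\ge 0$ and the tail sum is nonnegative, the $k$-th summand in Theorem~\ref{thm} is at least $q_k\varphi(q_k)^s \ge q_k^{1-s/t}$, which tends to infinity. Therefore the series of Theorem~\ref{thm} diverges, so $\hdim(E)\ge s$, and letting $s\uparrow l_\varphi$ yields $\hdim(E)\ge l_\varphi$.

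I do not expect a real obstacle here: the only mild subtlety is remembering that the minimality of $q_{k,s}$ provides a cheap upper bound on the $k$-th summand by evaluating \eqref{qks} at $m = q_k$, while the lower bound $q_k\varphi(q_k)^s$ follows simply from dropping the nonnegative terms $\tfrac{q_{k,s}-q_k}{q_k}\|q_k\theta\|$ and the tail sum. With these two inequalities in hand, the two halves of the corollary reduce to the elementary convergence/divergence of $\sum n^{-s/t}$.
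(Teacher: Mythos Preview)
Your proof is correct and follows essentially the same approach as the paper's own proof: both bound the $k$-th summand of Theorem~\ref{thm} above by evaluating \eqref{qks} at the trivial choice $m=q_k$ and below by dropping the nonnegative terms to retain $q_k\varphi(q_k)^s$, then use the definitions of $u_\varphi$ and $l_\varphi$ to compare with $\sum n^{-s/t}$. The only difference is cosmetic---you introduce an intermediate exponent $t$ where the paper uses parameters $\rho$ and $\varepsilon$.
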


\begin{proof}
(i) By the definition of $l_\varphi$, for any $\varepsilon >0$ for sufficiently large $n$
$$\varphi(n) > \frac{1}{n^{1/(l-\varepsilon)}}.$$
Therefore, we have for some large $k_0$
\begin{align*}
\sum_{k=0}^\infty \left[ q_k \left( \varphi(q_k) + \frac{q_{k,l-\varepsilon}-q_k}{q_k}\|q_{k}\theta\| \right)^{l-\varepsilon} + \sum_{n = q_{k,l-\varepsilon}+1}^{q_{k+1}-1} \varphi(n)^{l-\varepsilon} \right]
&\ge \sum_{k=k_0}^\infty q_k \varphi(q_k)^{l-\varepsilon}\\
&> \sum_{k=0}^\infty 1 = \infty,
\end{align*}
Hence, we have  $$l_\varphi \le \hdim (E).$$

(ii) By the definition of $u_\varphi$, for any $\rho >1$ for sufficiently large $n$
$$\varphi(n) < \frac{1}{n^{1/\rho u}}.$$
Therefore, by the minimality of the definition of $q_{k,\rho^2 u}$, we have
\begin{multline*}
\sum_{k=0}^\infty \left[ q_k \left( \varphi(q_k) + \frac{q_{k, \rho^2 u}-q_k}{q_k}\|q_{k}\theta\| \right)^{\rho^2 u} + \sum_{n = q_{k,\rho^2 u}+1}^{q_{k+1}-1} \varphi(n)^{\rho^2 u} \right] \\
\le\sum_{k=0}^\infty \left( q_k \varphi(q_k)^{\rho^2 u} + \sum_{n = q_{k}+1}^{q_{k+1}-1}  \varphi(n)^{\rho^2 u}  \right)
\le \sum_{k=0}^\infty \left( \frac{1}{q_k^{\rho -1}} + \sum_{n = q_k}^{q_{k+1}-1} \frac{1}{n^{\rho}} \right) < \infty,
\end{multline*}
which implies that
\begin{equation*}
\hdim (E) \le \rho u_\varphi. \qedhere
\end{equation*}
\end{proof}

\begin{corollary}[Xu \cite{Xu}]
$$ \hdim (E) \ge \limsup_{k \to \infty} \frac{\log q_k}{-\log \varphi(q_k)}.$$
\end{corollary}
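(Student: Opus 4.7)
The plan is to deduce Xu's lower bound directly from Theorem~\ref{thm} using only the trivial monotonicity $(a+b)^s \ge a^s$ for $a,b \ge 0$, $s>0$. Write $L := \limsup_{k \to \infty} \frac{\log q_k}{-\log \varphi(q_k)}$. I will show that for every $s < L$ the series
$$\sum_{k=0}^\infty \left[ q_k \left( \varphi(q_k) + \frac{q_{k,s}-q_k}{q_k}\|q_{k}\theta\| \right)^s + \sum_{n = q_{k,s}+1 }^{q_{k+1}-1} \varphi(n)^s \right]$$
diverges, which by Theorem~\ref{thm} forces $\hdim(E_\varphi(\theta)) \ge s$, and hence, on letting $s \to L^{-}$, gives the desired inequality.

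First, I would bound the $k$-th summand below by $q_k \varphi(q_k)^s$: drop the non-negative tail $\sum_{n > q_{k,s}} \varphi(n)^s$ and use $\bigl(\varphi(q_k) + \tfrac{q_{k,s}-q_k}{q_k}\|q_k\theta\|\bigr)^s \ge \varphi(q_k)^s$, which is immediate from $q_{k,s} \ge q_k$ and $\|q_k\theta\| > 0$. Nothing about the precise optimizer $q_{k,s}$ nor about the size of $\|q_k\theta\|$ is needed at this step; it is the same trivial discard used in part~(i) of Corollary~\ref{C1}, but now applied along a subsequence rather than uniformly.

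Next, for any $s < L$, the definition of $\limsup$ furnishes a subsequence $\{k_i\}$ along which $\frac{\log q_{k_i}}{-\log \varphi(q_{k_i})} > s$, i.e.\ $q_{k_i}\varphi(q_{k_i})^s > 1$. Combined with the previous step, infinitely many terms of the series are at least $1$, so the general term does not tend to zero and the series diverges. Theorem~\ref{thm} then yields $\hdim(E_\varphi(\theta)) \ge s$, and letting $s$ increase to $L$ concludes the proof.

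There is essentially no obstacle: Xu's bound corresponds to the coarsest possible estimate of the series in Theorem~\ref{thm}, namely the one obtained by retaining at each scale $k$ only the single ball $B(q_k\theta,\varphi(q_k))$ and discarding every other $B(n\theta,\varphi(n))$ with $q_k < n < q_{k+1}$. The argument above merely makes this observation formal, and no continued-fraction machinery beyond the inequality $q_{k,s}\ge q_k$ is required.
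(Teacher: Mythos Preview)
Your proof is correct and follows essentially the same approach as the paper: both arguments discard the tail sum and the $\|q_k\theta\|$ contribution to bound the $k$-th summand below by $q_k\varphi(q_k)^s$, then use the definition of the $\limsup$ to extract a subsequence along which this quantity exceeds $1$, forcing divergence. The only cosmetic difference is that the paper writes $s = t-\varepsilon$ while you take an arbitrary $s<L$ and let $s\to L^-$.
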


\begin{proof}
Let
$$t : = \limsup_{k \to \infty} \frac{\log q_k}{-\log \varphi(q_k)}.$$
Then for any $\varepsilon >0$ there exists $q_{k_i}$ such that $$ \varphi(q_{k_i}) > \frac{1}{q_{k_i}^{1/(t-\varepsilon)}}.$$
Hence, we have
\begin{align*}
\sum_{k=0}^{\infty} \left( q_{k} \left( \varphi(q_{k}) + \frac{q_{k,t-\varepsilon}-q_{k}}{q_{k}}\|q_{k}\theta\| \right)^{t-\varepsilon} + \sum_{n = q_{k,t-\varepsilon}+1}^{q_{{k}+1}-1} \varphi(n)^{t-\varepsilon} \right) &\ge \sum_{i=1}^\infty q_{k_i} \varphi(q_{k_i})^{t-\varepsilon} \\
&> \sum_{i=1}^\infty 1= \infty,
\end{align*}
which implies that
\begin{equation*}
 t \le  \hdim (E).  \qedhere
\end{equation*}
\end{proof}

\begin{corollary}[Liao and Rams \cite{LR}]
If $\frac{1 + u_\varphi}{1 + w} < u_\varphi$, then
$$ \hdim (E) \ge \frac{1 + u_\varphi}{1 + w}. $$ 
\end{corollary}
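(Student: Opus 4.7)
The plan is to deduce this corollary from Proposition~\ref{prop:LR2} (via Lemma~\ref{lem:LR3}), rather than by a direct estimate of the series in Theorem~\ref{thm}. The Liao--Rams quantity $(1+u_\varphi)/(1+w)$ is precisely what comes out of Proposition~\ref{prop:LR2} when one feeds it the two ingredients supplied by the hypotheses: infinitely many $n$ with relatively large $\varphi(n)$ (from $u_\varphi$) and intervals $[q_k,q_{k+1})$ that are as long as possible (from $w$).

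First I would fix $\varepsilon>0$ small enough that $u':=u_\varphi-\varepsilon$ still satisfies $(1+u')/(1+w)<u'$; this is possible since the hypothesis $d:=(1+u_\varphi)/(1+w)<u_\varphi$ is equivalent to $u_\varphi w>1$. The definition of $u_\varphi$ supplies infinitely many $n$ with $\varphi(n)\ge n^{-1/u'}$; for each such $n$, let $k(n)$ be the unique index with $q_{k(n)}\le n<q_{k(n)+1}$, and record $B_n=\log q_{k(n)+1}/\log q_{k(n)}$, $N_n=\log n/\log q_{k(n)}\in[1,B_n]$. After passing to a sparse enough subsequence (to fulfil the ``sufficiently fast increasing'' condition of Proposition~\ref{prop:LR2}), I would assume $(N_{n_i},B_{n_i})\to(N^*,B^*)$ with $N^*\in[1,B^*]$ and $B^*\in[1,w]$. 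Setting $k_i=k(n_i)$, $m_i=n_i=q_{k_i}^{N_i}$ and $K_i=N_i/u'$, the inequality $q_{k_i}^{-K_i}=n_i^{-1/u'}\le\varphi(n_i)$ ensures $\bigcap_i F_i\subset E_\varphi(\theta)$. Proposition~\ref{prop:LR2} then yields
\[\hdim\Big(\bigcap_i F_i\Big)\ge\min\Big\{u',\,\max\Big\{\frac{u'}{N^*},\,\frac{1}{1+B^*-N^*}\Big\}\Big\}.\]
A one-variable optimization shows that the inner maximum is at least $(1+u')/(1+B^*)$ for every admissible $N^*$, with equality at the balancing value $N^*=u'(1+B^*)/(1+u')$; combined with $u'w>1$ and $B^*\le w$ this reduces the bound to $(1+u')/(1+w)$, and letting $\varepsilon\to0$ produces $\hdim E_\varphi(\theta)\ge d$.

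The hard part will be the condition $K_i>1$ required by Proposition~\ref{prop:LR2}: the tight choice $K_i=N_i/u'$ can fail it when $N_i\le u'$, precisely the regime where the $n_i$'s land close to $q_{k_i}$. On such blocks I would replace $K_i$ by $1+\delta$ for a small $\delta>0$; the radius condition $\varphi(n_i)\ge q_{k_i}^{-(1+\delta)}/2$ survives because $N_i\le u'$ implies $\varphi(n_i)\ge n_i^{-1/u'}\ge q_{k_i}^{-1}$, and Proposition~\ref{prop:LR2} then delivers only $\hdim(\bigcap_i F_i)\ge 1$. This is sufficient whenever $d\le1$; the alternative $d>1$ would force $u_\varphi>w$ and contradict the trivial bound $\hdim E_\varphi(\theta)\le1$, so no further analysis is needed. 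A secondary technical point, namely sparsifying $\{k_i\}$ enough to meet the growth hypothesis of Proposition~\ref{prop:LR2}, is inherited verbatim from the analogous step in the proof of the main theorem and does not interact with the optimization above.
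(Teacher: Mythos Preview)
Your argument is essentially correct, but it follows a genuinely different route from the paper's. The paper's proof is a deduction \emph{from Theorem~\ref{thm}}: for $z=(1+u_\varphi)/(1+w)$ and small $\varepsilon>0$ it shows that the defining series diverges at $s=z-\varepsilon$. It splits into two cases according to whether infinitely many of the ``$u_\varphi$--good'' integers $n$ (those with $\varphi(n)>n^{-1/(u-\varepsilon)}$) fall in the range $[q_k,q_{k,z-\varepsilon}]$ or in $(q_{k,z-\varepsilon},q_{k+1})$. In the first case the first summand $q_k(\varphi(q_k)+\cdots)^{z-\varepsilon}$ is bounded below via a calculus minimization of $t\mapsto t^{-1/(u-\varepsilon)}+t\|q_{k_i}\theta\|/q_{k_i}$; in the second the tail sum $\sum_{n>q_{k,z-\varepsilon}}\varphi(n)^{z-\varepsilon}$ is shown to exceed $1$ along a subsequence. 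Either way the series diverges, and Theorem~\ref{thm} gives the bound.

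You bypass Theorem~\ref{thm} entirely and feed a well-chosen subsequence of $u_\varphi$--good integers directly into Proposition~\ref{prop:LR2}, then optimize the resulting formula over the limit parameters $(N^*,B^*)$. This is essentially the original Liao--Rams derivation of the bound from their own Proposition~2.3, so while your proof is valid, it does not serve the purpose of Section~\ref{sec:relations}, which is precisely to check that the new Theorem~\ref{thm} recovers the earlier results. Two minor points: your sentence ``Proposition~\ref{prop:LR2} then delivers only $\hdim(\bigcap_i F_i)\ge 1$'' is literally $\ge 1/(1+\delta)$, so you should let $\delta\to0$; and the dichotomy $N_i>u'$ versus $N_i\le u'$ is cleaner if made \emph{after} extracting the convergent subsequence (so that one uniformly sets $K_i=N_i/u'$ when $N^*>u'$ and $K_i=1+\delta$ when $N^*\le u'$), rather than ``on such blocks''. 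Your optimization $\max\{u'/N^*,\,1/(1+B^*-N^*)\}\ge(1+u')/(1+B^*)$ and the subsequent reduction via $B^*\le w$, $u'w>1$ are correct.
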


\begin{proof} Write $u_{\varphi}=u$.
Let $$z := \frac{1 + u}{1 + w} < u$$
 and $0 < \varepsilon < z$ be given.
Since $\limsup \frac{- \log \| q_k \theta \|}{\log q_k} = w$,
for  all $n$ large, $$\| q_{n} \theta \| > \frac{1}{q_{n}^{w + \varepsilon}} .$$

There are two cases: \\
(a) If there are infinitely many $n$ such that $q_k \le n \le q_{k,z-\varepsilon}$
and
$$ \varphi(n) > \frac{1}{n^{1/(u-\varepsilon)}}.$$
(b) Suppose that for all large $n$ such that  $q_k \le n \le q_{k,z-\varepsilon}$
$$ \varphi(n) \le \frac{1}{n^{1/(u-\varepsilon)}}.$$

Case (a):
\smallskip

There exist $\{n_i\}$  and $\{k_i\}$ such that
$$ \varphi(n_i) > \frac{1}{n_i^{1/(u-\varepsilon)}},  \qquad  q_{k_i} \le n_i \le q_{k_i,z-\varepsilon}.$$
Then we have
$$ \varphi(q_{k_i}) \ge \varphi(n_i) > \frac{1}{n_i^{1/(u-\varepsilon)}} \ge \frac{1}{(q_{k_i,z-\varepsilon})^{1/(u-\varepsilon)}}.$$
Thus, since $q_{k_i,z-\varepsilon} \ge q_{k_i}$, we get
\begin{equation*}
\varphi(q_{k_i}) + \frac{q_{k_i,z-\varepsilon}}{q_{k_i}}\|q_{k_i}\theta\|
> \frac{1}{(q_{k_i,z-\varepsilon})^{1/(u-\varepsilon)}} + \frac{q_{k_i,z-\varepsilon}}{q_{k_i}} \|q_{k_i}\theta\|
\ge \min_{t \ge q_{k_i}} f_{u-\varepsilon}(t),
\end{equation*} where we set $f_v(t):=\frac{1}{t^{1/v}} + t \frac{\| q_{k_i} \theta \|}{q_{k_i}}$.

By elementary calculus, we have
$$
\min_{t \ge t_0} f_v(t) = \begin{cases}
f_v \left( \left(\frac{q_{k_i}}{v \| q_{k_i} \theta \|}\right)^{v/(1+v)} \right)
&\text{ if } 0 < t_0 \le \left(\frac{q_{k_i}}{v \| q_{k_i} \theta \|}\right)^{v/(1+v)}, \\
f_v(t_0) &\text{ if }  t_0 > \left(\frac{q_{k_i}}{v \| q_{k_i} \theta \|}\right)^{v/(1+v)}.
\end{cases}
$$
Therefore, if $q_{k_i} \le \left(\frac{q_{k_i}}{(u-\varepsilon)\| q_{k_i} \theta \|} \right)^{(u-\varepsilon)/(1+u-\varepsilon)}$, then
\begin{align*}
\varphi(q_{k_i}) + \frac{q_{k_i,z-\varepsilon}}{q_{k_i}}\|q_{k_i}\theta\|
&> \left( \frac{ (u-\varepsilon) \| q_{k_i} \theta \|}{q_{k_i}} \right)^{\frac{1}{1+u-\varepsilon}}  +  \left(\frac{q_{k_i}}{(u-\varepsilon)\| q_{k_i} \theta \|} \right)^{\frac{u-\varepsilon}{1+u-\varepsilon}}\frac{\|q_{k_i}\theta\|}{q_{k_i}} \\
&> (u-\varepsilon)^{\frac{1}{1+u-\varepsilon}}\left(\frac{1}{q_{k_i}} \right)^{\frac{1+w+\varepsilon}{1+u-\varepsilon}}+ \| q_{k_i} \theta \|  \\
&> \left(\frac{u-\varepsilon}{q_{k_i}} \right)^{\frac{1}{z-\varepsilon}} + \| q_{k_i} \theta \|
\end{align*}
and if $q_{k_i} > \left(\frac{q_{k_i}}{(u-\varepsilon)\| q_{k_i} \theta \|}\right)^{(u-\varepsilon)/(1+u-\varepsilon)}$, then
\begin{align*}
\varphi(q_{k_i}) + \frac{q_{k_i,z-\varepsilon}}{q_{k_i}}\|q_{k_i}\theta\|
&> \frac{1}{(q_{k_i})^{1/(u-\varepsilon)}} + \| q_{k_i} \theta \| \ge \frac{1}{(q_{k_i})^{1/(z-\varepsilon)}} + \| q_{k_i} \theta \|.
\end{align*}
Thus,
\begin{equation*}
q_{k_i} \left(\varphi(q_{k_i}) +\frac{q_{k_i,z-\varepsilon}-q_{k_i}}{q_{k_i}}\|q_{k_i}\theta\| \right)^{z-\varepsilon}> u - \varepsilon .
\end{equation*}
Hence, we have
$$
\sum_{i=1}^{\infty} \left[ q_{k_i} \left( \varphi(q_{k_i}) + \frac{q_{k_i,z-\varepsilon} - q_{k_i}}{q_{k_i}}\|q_{k_i}\theta\| \right)^{z-\varepsilon} + \sum_{n = q_{k_i,z-\varepsilon} +1}^{q_{{k_i}+1}-1} \varphi(n)^{z-\varepsilon} \right] = \infty,
$$
which implies that
$$z -\varepsilon \le \hdim (E) .$$
\medskip

Case (b):
\smallskip

By the definition of $u = \limsup\frac{\log n}{-\log \varphi(n)}$,
there exist $\{n_i\}$ and $\{k_i\}$ such that
$$ \varphi(n_i) > \frac{1}{n_i^{1/(u-\varepsilon/2)}}, \qquad  q_{k_i,z-\varepsilon} < n_i < q_{k_i+1}.$$
Then for large $i$ we get
$$ \frac{1}{ ( q_{k_i,z-\varepsilon} )^{1/(u - \varepsilon)}} \ge \varphi(q_{k_i,z-\varepsilon}) \ge \varphi(n_i) > \frac{1}{(n_i)^{1/(u-\varepsilon/2)}}.$$
Thus, for large $i$ we have
$$ q_{k_i,z-\varepsilon} < (n_i)^{(u - \varepsilon)/(u-\varepsilon/2)} < \frac{n_i}{2}.$$
Therefore, we have for large $i$
\begin{align*}
\sum_{n = q_{k_i,z-\varepsilon}+1}^{q_{{k_i}+1}-1} \varphi(n)^{z-\varepsilon}
&\ge \sum_{n = q_{k_i,z-\varepsilon}+1}^{q_{{k_i}+1}-1} \varphi(n)^{u-\varepsilon}
\ge \sum_{n = q_{k_i,z-\varepsilon}+1}^{n_i} \varphi(n_i)^{u-\varepsilon} \\
&\ge \frac{n_i- q_{k_i,z-\varepsilon}}{ (n_i)^{(u - \varepsilon)/(u-\varepsilon/2)} } > 1 .
\end{align*}
Hence, we have
$$\sum_{k=0}^\infty \left[ q_k \left( \varphi(q_k) + \frac{q_{k,z-\varepsilon}-q_k}{q_k}\|q_{k}\theta\| \right)^{z-\varepsilon} + \sum_{n = q_{k,z-\varepsilon}+1}^{q_{k+1}-1} \varphi(n)^{z-\varepsilon} \right] = \infty, $$
which implies that
\begin{equation*}
z - \varepsilon \le  \hdim (E) . \qedhere
\end{equation*}
\end{proof}

\section*{Acknowledgements}
The authors would like to thank Huazhong University of Science and Technology for inviting two of the authors.
D.K. is supported by NRF-2015R1A2A2A01007090.
M.R. is supported by National Science Centre grant 2014/13/B/ST1/01033 (Poland). B.W. is supported by NSFC 11471130 and NCET-13-0236.

\end{document}